\theoremstyle{theorem}
\newtheorem{defn}{Definition}[section]
\newtheorem{thm}[defn]{Theorem}
\newtheorem{lem}[defn]{Lemma}
\newtheorem{prop}[defn]{Proposition}
\newtheorem{cor}[defn]{Corollary}
\def\B{{\mathcal B}}
\def\C{{\mathcal C}}
\def\G{{\mathcal G}}
\def\H{{\mathcal H}}
\def\DS{{\mathcal DS}}
\def\kphi{{ K}_{\varphi}}
\def\diam{{\text{diam}}}
\def\N{{\mathbb N}}
\def\R{{\mathbb R}}
\def\2n{\{0,1\}^{\N}}
\begin{document}

\title[Visible and Invisible Cantor sets]{Visible and Invisible Cantor sets}
\author[C.~Cabrelli]{Carlos Cabrelli}
\address[C. Cabrelli and U. Molter]{Departamento de Matem\'atica\\
Facultad de Ciencias Exactas y Naturales\\
Universidad de Buenos Aires\\
Pabell\'on I, Ciudad Universitaria\\
C1428EGA C.A.B.A.\\
Argentina \\
and IMAS - CONICET, Argentina}
\email[C.Cabrelli]{cabrelli@dm.uba.ar}
\urladdr{http://mate.dm.uba.ar/~cabrelli}
\email[U.~Molter]{umolter@dm.uba.ar}
\urladdr{http://mate.dm.uba.ar/~umolter}
\thanks{C. Cabrelli and U. Molter are partially supported by Grants UBACyT
X638 and X502 (UBA),  and PIP 112-200801-00398 (CONICET)}
\author[U.~B.~Darji]{Udayan B. Darji}
\address[U. ~B.~Darji]{Department of Mathematics\\
University of Louisville\\
Louisville, KY 40292 }
\email[U.~Darji]{ubdarj01@louisville.edu}
\thanks{U. Darji is partially supported by University of Louisville Project Initiation Grant.}
\author[U.Molter]{Ursula M. Molter}
\subjclass[2000]{Primary 28A78, 28A80}
\keywords{Hausdorff dimension, Packing dimension, Cantor set, Meager sets, Unconmeasurable sets}
\thanks{This paper is in final form and no version of it will be submitted
for publication elsewhere.}

\begin{abstract}
In this article we study for which Cantor sets there exists a {\em gauge}-function $h$, such that the $h$-Hausdorff-measure is positive and finite. 
We show that the collection of sets for which this is true is dense in the set of all compact subsets of a Polish space $X$.
More general, any {\em generic} Cantor set satisfies that there exists a translation-invariant measure $\mu$ for which the set has positive and finite $\mu$-measure.

In contrast, we generalize an example of Davies of dimensionless Cantor sets (i.e. a Cantor set for which any translation invariant measure is either $0$ or non-$\sigma$-finite) that enables us to show that the collection of these sets is also dense in the set of all compact subsets of a Polish space $X$.
\end{abstract}

\maketitle

\section{Introduction}
Measure theoretic dimension theory provides a fundamental tool to classify sets.
However Hausdorff dimension, as well as other notions of dimension such as
packing dimension and Minkowski dimension are not completely satisfactory.

For example there are many examples of compact sets whose Hausdorff 
measure at its critical exponent is zero or infinite.
This could also happen even if we consider  the generalized Hausdorff $h$-measure were $h$ is a appropriate  gauge function in a well defined class. See for example \cite{B}.
Furthermore, in 1930 Davies \cite{D} produced a beautiful example of a Cantor set on the real line
whose $\mu$-measure is zero or no $\sigma$-finite for every  translation invariant Borel measure $\mu$.
See \cite{MT} for more results in this direction.

In this article we study this phenomena. 
We try to estimate in some way the size of the class of  visible sets, that is sets that have positive 
and finite measure for some $h$-Hausdorff measure or some translation invariant Borel  measure.

We focus on Cantor sets in the context of Polish spaces.
In \cite{CMMS04} the authors proved that a large class of Cantor sets defined 
by monotone gap-sequences are visible.
They explicitly construct the corresponding gauge function  $h$.
Here we extend this result to a larger class in  general Polish spaces (Theorem~\ref{visibility}). 
 We also obtain density results for the class of visible sets and study {\it generic} visibility for subsets of the real line.
 
 Then we focus on the concept of strong invisibility (see def \ref{invisible}). We were able to extend the ideas in the construction of Davies
 to a general abelian Polish group, obtaining a big class of strongly invisible compact sets in these groups.
 We also prove that the set of strongly invisible sets in the space of compact sets in the line with the Hausdorff distance is dense.

 The paper is organized as follows. We first introduce some notation and terminology in section \ref{sec-2}. 
 A key ingredient will be the definition of visibility and strong-invisibility and the analysis of the appropriate topology to be able to state density results.
 In section~\ref{sec-visible} we show that a large class of Cantor sets is visible, and in section  \ref{sec-invisible} we show how to construct many strongly invisible sets.

\section{Terminology and Notation}\label{sec-2} Throughout $X$ will denote a {\em Polish space}, i.e., a separable space with a complete metric.  We
let $\C(X)$ denote the set of all compact subsets of $X$ endowed with the Hausdroff metric $d_H$.
We recall that for $X$ Polish, $\C(X)$ is Polish and for $X$ compact, $\C(X)$ is compact. We let $\B(X)$ denote the set of Borel subsets of $X$. 

A subset of a Polish space
is a {\em Cantor space} (or {\em Cantor set}) if it is compact, has no isolated points and has a basis of clopen sets, i.e.,
sets which are simultaneously open and closed.  There is always an homeomorphism between two Cantor sets.
We also consider several special types of Cantor sets subsets of the reals. We note that for a subset
of the reals to be a Cantor set, it suffices to have the properties of being compact, perfect and containing no interval.

We now describe a general way of describing any Cantor set subset of $[0,1]$ of Lebesgue measure zero which contains $\{0,1\}$. Let $D$ be the set of dyadic rationals in $(0,1)$. 

$$D = \left \{ i2^{-k}| 1 \le i \le 2^k-1, k \in \N \right \},$$ 

$$\G =\left \{\varphi:D \rightarrow (0,1) |\sum_{d \in D} \varphi (d) = 1  \right \},$$
and for each $\varphi \in \G$ we associate the function,
$$ \Phi (d) = \sum_{d' \in D, d' < d} \varphi(d').$$

The function $\varphi$ can be though of as a density function supported on $D$ and $\Phi$ is the associated cumulative distribution function.
Associated to $\varphi$, we define the Cantor set $K _{\varphi}$ as follows: 

$$\kphi = [0,1] \setminus \bigcup_{d \in D} \left (\Phi (d), \Phi (d) + \varphi (d) \right).$$
We think of $\varphi$ as the ``gap function" of $\kphi$. We have the following basic facts.

\begin{prop}\label{basic1}
For each $\varphi \in \G$, $\kphi$ is Cantor set subset of $[0,1]$ containing $\{0,1\}$
with Lebesgue measure zero. Conversely, given any Cantor set $K$ subset of $[0,1]$ with
Lebesgue measure zero which also contains $\{0,1\}$, there is $\varphi \in \G$ such that $K =\kphi$.\end{prop}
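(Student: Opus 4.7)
The plan is to verify both directions by a careful analysis of the gap structure of a measure-zero Cantor set in $[0,1]$, together with the standard order-isomorphism of countable dense linear orders without endpoints.

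\textbf{Forward direction.} Given $\varphi\in\G$, set $I_d=(\Phi(d),\Phi(d)+\varphi(d))$. First I would observe that for $d<d'$ in $D$,
$$\Phi(d')=\sum_{d''<d'}\varphi(d'')\ \ge\ \sum_{d''\le d}\varphi(d'')=\Phi(d)+\varphi(d),$$
and the inequality is strict because between any two dyadics there lies another dyadic $d''$ with $\varphi(d'')>0$. Hence the $I_d$ are pairwise disjoint open subintervals of $(0,1)$ with $\sum_d|I_d|=\sum_d\varphi(d)=1$. Therefore $\kphi=[0,1]\setminus\bigcup_dI_d$ is closed (hence compact), has Lebesgue measure zero, is contained in $[0,1]$, and contains $\{0,1\}$ since no $I_d$ touches $0$ or $1$. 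Being measure zero, it has empty interior and thus contains no interval. Finally, to exhibit that $\kphi$ has no isolated points, I would note that every endpoint $\Phi(d)$ and $\Phi(d)+\varphi(d)$ lies in $\kphi$, and that if some $x\in\kphi$ were isolated from both sides, then small one-sided neighborhoods of $x$ would each lie in a single removed interval, forcing $x=\Phi(d)+\varphi(d)=\Phi(d')$ with no dyadic between $d$ and $d'$, contradicting the density of $D$. A perfect, compact, totally disconnected subset of $[0,1]$ is a Cantor set.

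\textbf{Converse direction.} Given a Cantor set $K\subseteq[0,1]$ of Lebesgue measure zero with $\{0,1\}\ss K$, decompose $[0,1]\setminus K=\bigsqcup_{n}G_n$ into its (countably many) disjoint open connected components, the \emph{gaps} of $K$. Since $0,1\in K$, each $G_n\ss(0,1)$, and since $K$ has no interior, the collection is infinite. Linearly order the gaps by $G<G'\iff \sup G\le\inf G'$. I would then verify two properties of this order:
\begin{itemize}
\item[(i)] \emph{Density without endpoints.} Given $G<G'$, the open interval $(\sup G,\inf G')$ is nonempty and cannot be entirely inside $K$ (no interior), so it contains a whole gap $G''$ with $G<G''<G'$. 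Since $0\in K$ is not isolated and $K$ is nowhere dense, points and gaps accumulate at $0$ and similarly at $1$; thus there is no leftmost or rightmost gap.
\end{itemize}
By Cantor's isomorphism theorem for countable dense linear orders without endpoints, there is an order-isomorphism $d\mapsto G_d$ from $D$ onto the set of gaps of $K$.

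\textbf{Producing $\varphi$.} Define $\varphi(d)=|G_d|\in(0,1)$. Then $\sum_{d\in D}\varphi(d)$ equals the Lebesgue measure of $[0,1]\setminus K$, which is $1$ since $K$ has measure zero; hence $\varphi\in\G$. The critical identification is that $\Phi(d)$ equals $\inf G_d$: indeed,
$$\inf G_d\ =\ m\bigl([0,\inf G_d]\setminus K\bigr)\ =\ \sum_{G_{d'}<G_d}|G_{d'}|\ =\ \sum_{d'<d}\varphi(d')\ =\ \Phi(d),$$
using order-preservation of $d\mapsto G_d$, measure zero of $K$, and countable additivity. Consequently $G_d=(\Phi(d),\Phi(d)+\varphi(d))$ for every $d$, so $K=\kphi$.

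\textbf{Main obstacle.} The one nontrivial step is establishing that the gaps of $K$ form a countable dense linear order without endpoints, so that Cantor's isomorphism theorem furnishes a single order-preserving bijection with $D$; once this is in place, the identity $\Phi(d)=\inf G_d$ is a direct consequence of measure zero and the translation of ``position in $[0,1]$'' into ``total length of gaps to the left''. Care is also required in the forward direction to use density of $D$ to rule out isolated points at right/left endpoints of removed intervals.
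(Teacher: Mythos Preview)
The paper does not supply a proof of this proposition: it is stated among the ``basic facts'' immediately after the definition of $\kphi$, with no argument given. Your proposal therefore fills in what the paper omits, and there is no proof in the paper to compare against.

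Your argument is correct in both directions. In the forward direction the key observations are exactly the ones you identify: strict inequality $\Phi(d')>\Phi(d)+\varphi(d)$ for $d<d'$ (using density of $D$ and positivity of $\varphi$) gives disjointness of the removed intervals and simultaneously rules out isolated points; the measure computation is immediate. In the converse direction your use of Cantor's isomorphism theorem for countable dense linear orders without endpoints is the natural device, and the identity $\inf G_d=\Phi(d)$ via ``total gap length to the left equals position'' is the right way to close the argument. One small remark: in your perfectness argument you should also treat the boundary cases $x=0$ and $x=1$ separately (neither can be a right or left endpoint of a removed interval, by $\Phi(d)>0$ and $\Phi(d)+\varphi(d)<1$), but this is routine.
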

\begin{prop}\label{uniquephi} Suppose $\varphi_1, \varphi_2 \in \G$ are such that $K _{\varphi_1} = K _{\varphi_2}$. Then,
there is a homeomorphism $g$ on $[0,1]$, mapping $D$ onto itself such that $\varphi_2 = \varphi_1 \circ g$. \end{prop}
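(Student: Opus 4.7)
The plan is to extract a bijection $g_0 \colon D \to D$ directly from the assumption $K_{\varphi_1}=K_{\varphi_2}$ by matching the gaps (connected components of the complement in $[0,1]$), and then to show this bijection is order-preserving so that it extends canonically to a homeomorphism of $[0,1]$.

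First, I would note that the open intervals $I_\varphi(d):=(\Phi(d),\Phi(d)+\varphi(d))$, $d \in D$, are pairwise disjoint (because $\sum_{d \in D}\varphi(d)=1$ and $\Phi$ is monotone), and each is a connected component of $[0,1]\setminus K_\varphi$ since $\varphi(d)>0$. When $K_{\varphi_1}=K_{\varphi_2}$, the family of connected components of the common complement is intrinsic to the Cantor set, so for every $d\in D$ there is a unique $d'\in D$ with $I_{\varphi_2}(d)=I_{\varphi_1}(d')$. Setting $g_0(d)=d'$ gives a bijection $g_0 \colon D\to D$ that automatically satisfies $\varphi_2(d)=\varphi_1(g_0(d))$ and $\Phi_2(d)=\Phi_1(g_0(d))$.

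Second, I would verify that $g_0$ is strictly increasing. For any $\varphi \in \G$, the map $d\mapsto\Phi(d)$ is strictly increasing on $D$ (because all values of $\varphi$ are positive), and $d<d''$ in $D$ if and only if the interval $I_\varphi(d)$ lies entirely to the left of $I_\varphi(d'')$ in $[0,1]$. Since $g_0$ preserves the family of gap intervals, and left-to-right position in $[0,1]$ is intrinsic, $g_0$ preserves the dyadic order. Then, since $D$ is dense in $[0,1]$, the formula
\[
g(x)=\sup\{g_0(d):d\in D,\ d\le x\}\quad(x\in(0,1]),\qquad g(0)=0,
\]
defines a strictly increasing continuous bijection $g\colon[0,1]\to[0,1]$ with $g(0)=0$, $g(1)=1$ and $g|_D=g_0$; hence $g$ is a homeomorphism of $[0,1]$ mapping $D$ onto $D$, and $\varphi_2=\varphi_1\circ g$ on $D$, as required.

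The main obstacle is the ordering bookkeeping in the second step: one must confirm that the purely set-theoretic matching of gaps, viewed as a family of open intervals with no labels attached, respects the linear order on $D$. Once this identification of "gap-order" with the dyadic order is in hand, the extension from $D$ to $[0,1]$ is a standard fact about order isomorphisms between dense subsets of a closed interval, and the verification $\varphi_2 = \varphi_1 \circ g$ is immediate from the construction of $g_0$.
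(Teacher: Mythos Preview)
The paper states Proposition~\ref{uniquephi} as a basic fact and does not supply a proof, so there is nothing to compare against; your argument fills in the omitted details and is correct. The gap-matching bijection $g_0$ is well defined because between any two dyadic rationals there are infinitely many others, forcing $\Phi(d_2) > \Phi(d_1)+\varphi(d_1)$ for $d_1<d_2$, so the intervals $I_\varphi(d)$ are precisely the connected components of $[0,1]\setminus K_\varphi$; the order-preservation and the extension of $g_0$ to a homeomorphism of $[0,1]$ are handled as you indicate.
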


The following special subclass of $\kphi$'s was studied in \cite{CMMS04}. Let 
$$\DS = \left \{ \alpha \in (\R {^+}) ^{\N}: \alpha \mbox{ is decreasing and } \sum_{n=1}^{\infty} \alpha(i) =1  \right \} .$$ 
For each $\alpha \in \DS$, we define $K_{\alpha} = K_{\varphi}$
where  $\varphi(1/2) =\alpha (1) $, $\varphi(1/4)=\alpha (2) $, 
$\varphi(3/4) =\alpha (3), \dots, \varphi(\frac{2s+1}{2^j})=\alpha (2^{j-1}+s) $. We call the sequence $\alpha$  a "gap sequence". 

We introduce necessary terminology and notation concerning measures. 
Let 
$$\H = \{h:[0,\infty) \rightarrow [0, \infty) | h (0) =0, h \mbox{ is continuous and nondecreasing} \}, $$
and $\mu^h$ be the associated Hausdorff measure defined on the Borel subsets of $X$. 
\begin{defn} We call $M \in \B (X)$
{\bf $\H$-visible} if there is $h \in \H$ such that $0 < \mu _h (M) < \infty$, i.e., $M$ is an $h$-set for some $h \in \H$.  
\end{defn}

In \cite{CMMS04} it was shown that for any $\alpha \in \DS$, $K_{\alpha}$ is $\H$-visible.
 
The main purpose of this article is determine wether the previous result can be extended to other $K_\varphi$, or if not, how big is the class of Cantor sets for which this is true.

In Polish groups, i.e., topological groups with Polish topology, Hausdorff measures are particular instances of general translation invariant Borel measures. 
\begin{defn} \label{invisible} Let $X$ be a Polish group. 
A set $M \in \B (X)$ is
called {\bf visible} if there exists a translation invariant Borel measure $\mu$ on $\B (X)$ such that $0 < \mu (M) < \infty$. A set $M \in \B (X)$ is
called {\bf strongly invisible} if for every translation invariant Borel measure $\mu$ on $\B (X)$ we have that $\mu (M) =0$ or $M$ is not $\mu$ $\sigma$-finite. 
\end{defn}
Davies \cite{D} showed 
that there is a compact subset of $\R$ which is strongly invisible.
In a Polish group a {\bf Davies set} is a compact set which is strongly invisible. Many natural examples of Borel sets which are strongly invisible were given in \cite{MT}.

We would like to discuss visibility of a ``randomly" chosen compact or Cantor set. Unfortunately, even in the case of the reals, there is no suitable natural measure on the set 
of compact or Cantor sets. Hence, we use the notion of genericity. Let $X$ be a Polish space. A set $M \subseteq X$ is {\bf meager} it if is the countable union of nowhere dense sets. 
The set of meager sets forms a $\sigma$-ideal, i.e., subset of a meager sets is meager and the countable union of meager sets is meager. Moreover, as $X$ is complete, the Baire 
category theorem holds and hence no nonempty open set is meager. One thinks of meager sets as a collection of small sets and its complements as big sets. A set is {\bf comeager} 
if its complements is meager. We say that a {\bf generic element of $X$ has property $P$} means that the set of elements of $X$ which has property $P$ is comeager in $X$. 
 
\section{Visible sets} \label{sec-visible}
In this section we study visible sets and $\H$-visible sets. In particular,
Theorem~\ref{visibility} shows that a large class of Cantor sets are $\H$-visible.
Proposition~\ref{dec} shows that this includes the class of Cantor sets $K_{\alpha}$, $\alpha \in \DS$, studied in \cite {CMMS04}. Then, we discuss how big are the classes of visible, 
$\H$-visible and strongly invisible sets. We show that the class of $\H$-visible and strongly invisible sets are dense in $\C ([0,1])$ and, moreover, a generic compact subset of $[0,1]$ 
is visible. It remains open whether a generic compact subset of $[0,1]$ is $\H$-visible.\\
 
\subsection{$\H$-visibility}

We now introduce the construction necessary for our main $\H$-visibility theorem.\\

For $\sigma \in \N ^{<\N}$, $\sigma = \sigma_1 \dots \sigma_n$, we denote by $|\sigma|=n$ the length of $\sigma$, and for $k < |\sigma|$, $\sigma|k$ are the first $k$ digits of $\sigma$.  We say that $\sigma \in \N ^{<\N}$ is an extension of $\tau \in \N ^{<\N}$ if $|\sigma| > |\tau|$ and $\sigma||\tau| = \tau$. Further, if $\sigma \in \N^{\N}$, $\sigma|n$ is the restriction of $\sigma$ to the first $n$ digits.
 
A {\bf tree T} is simply a subset of $\N ^{<\N}$ with the property that if $\sigma \in T$ and $\sigma$ is an extension
of $\tau$, then $\tau \in T$. The {\bf body } of  $T$, denoted by $[T]$, is 
$$ [T] = \{ \sigma \in \N ^ {\N}: \sigma | n  \in T \mbox { for all } n \in \N \}.$$ For a tree $T$ and $\sigma \in T$, 
the valency of $\sigma$ in $T$ is 
the cardinality of $\{n \in \N: \sigma n \in T \}$. A tree $T$ is a {\bf Cantor tree} if for each $\sigma \in T$,  the valency of $\sigma$ 
is finite and at least 2.  If $T$ is a Cantor tree and each $\sigma \in T$ has valency $n$, then $T$ is an {\bf $n$-Cantor tree.}

Let $X$ be a Polish space and $T$ be a tree. A function $f$ from $T$ into the collection of all nonempty open subsets of $X$ is called a {\bf $T$-assignment }
into $X$ if the following conditions are satisfied.
\begin{enumerate}
\item For each $\sigma \in T$, $f(\sigma)$ is a nonempty open subset of $X$.
\item The diameter of $f(\sigma)$ is less than $1/|\sigma|$ for all $\sigma \in T$.
\item If $\sigma, \tau \in T$ with $\sigma \neq \tau$ and $|\sigma| = |\tau|$, then $f(\sigma) \cap f(\tau) = \emptyset$.
\item If $\sigma, \tau \in  T$ with $\tau$ an extension of $\sigma$, then, $\overline{f(\tau)} \subseteq f(\sigma)$.
\end{enumerate}

If $f$ is a $T$-assignment into $X$, then we let 
$$[f] =\bigcup _{\sigma \in [T]} \bigcap_{n=1}^{\infty} \overline{f({\sigma | n})}.$$

The following proposition is obvious. 

\begin{prop} Let $X$ be a Polish space, $T$ be a Cantor tree and $f$ be a $T$-assignment into $X$. Then, $[f]$ is 
a Cantor set. \end{prop}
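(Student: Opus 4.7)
The plan is to exhibit $[f]$ as the image of $[T]$ under a canonical continuous injection, and then to observe that $[T]$ is itself a Cantor space. Since a continuous injection from a compact space into a Hausdorff space is automatically a homeomorphism onto its image, the two pieces together prove the proposition.

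First, I would fix a complete compatible metric on the Polish space $X$ and define $\Phi\colon [T] \to X$ by letting $\Phi(\sigma)$ be the unique point of $\bigcap_{n=1}^{\infty} \overline{f(\sigma|n)}$. This intersection is a singleton: condition (4) makes the sets a nested sequence of nonempty closed subsets of $X$, and condition (2) forces their diameters to zero, so Cantor's intersection theorem applies. By construction $[f] = \Phi([T])$.

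Next I would verify that $\Phi$ is a continuous injection from the compact space $[T]$. \emph{Continuity}: if $\sigma_k \to \sigma$ in $[T]$, then for each $n$ eventually $\sigma_k|n = \sigma|n$, so both $\Phi(\sigma_k)$ and $\Phi(\sigma)$ lie in $\overline{f(\sigma|n)}$, giving $d(\Phi(\sigma_k), \Phi(\sigma)) < 1/n$. \emph{Injectivity}: if $\sigma \neq \tau$ in $[T]$ and $n$ is minimal with $\sigma|n \neq \tau|n$, then (4) gives $\Phi(\sigma) \in \overline{f(\sigma|(n+1))} \subseteq f(\sigma|n)$ and similarly $\Phi(\tau) \in f(\tau|n)$, whereas (3) forces $f(\sigma|n) \cap f(\tau|n) = \emptyset$. \emph{Compactness of $[T]$} follows from the finite valency at each node by a standard diagonal argument.

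Finally, $[T]$ is itself a Cantor space: it is compact, metrizable, and totally disconnected as a subspace of $\N^{\N}$, and perfect because each node of $T$ has valency at least two, so every $\sigma \in [T]$ is a limit of branches agreeing with it on an arbitrarily long initial segment and then diverging. Hence $[f]$, being homeomorphic to $[T]$, is a Cantor set in $X$. The main subtlety in the argument is the injectivity step, where one must pass to level $n+1$ via (4) in order to convert the open-set disjointness of (3) at level $n$ into an actual separation of the limit points $\Phi(\sigma)$ and $\Phi(\tau)$; every other piece is a direct unpacking of the definitions.
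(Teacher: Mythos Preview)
Your proof is correct. The paper itself does not give a proof of this proposition---it simply declares it ``obvious''---so your argument is a careful fleshing-out of exactly the verification the authors leave to the reader, via the canonical coding map $\Phi\colon [T]\to X$ and the observation that $[T]$ is a Cantor space.
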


For the following definitions assume that $T$ is a Cantor tree and $f$ is a $T$-assignment into $X$. We say
that  $f$  is a {\bf regular T-assignment} if for all $n \in \N$ the following holds: $$ \max \{ \diam (f(\sigma)): \sigma \in T , |\sigma| =n+1\} <
\min \{ diam (f(\sigma)): \sigma \in T , |\sigma| =n\}.$$ 

Let $ 2 \le l < \infty$. We say that $f$ satisfies the {\bf $l$-intersection condition} if the following condition holds.

For each $n \in \N$ and  each open ball $B$
in $X$, if $B$ intersects at least $l$ elements of $\{f (\sigma): \sigma \in T, |\sigma| =n\}$, then we must have that 
$\overline{f(\sigma)} \subseteq B$ for some $\sigma \in T$ with $|\sigma | =n$.

\begin{thm}\label{visibility} Let $X$ be a Polish space, $T$ be a $n$-Cantor tree, and $f$ be a $T$-assignment which is regular
and satisfies the $l$-intersection condition. Then, $[f]$ is $\H$-visible. \end{thm}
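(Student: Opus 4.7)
The plan is to apply the mass distribution principle to a natural measure on $[f]$ after designing a gauge $h \in \H$ calibrated to the scales of the assignment $f$. Because $T$ is an $n$-Cantor tree, $[T]$ is a Cantor space carrying the uniform probability measure $\nu$ that gives each level-$k$ cylinder mass $1/n^k$. The map $\Psi:[T]\to[f]$ sending $\sigma$ to the unique point of $\bigcap_k\overline{f(\sigma|k)}$ is continuous by conditions (2) and (4), and condition (4) also forces $\Psi(\sigma)\in f(\sigma|k)$ for every $k$. Put $\mu=\Psi_{*}\nu$, a Borel probability measure supported on $[f]$; this yields the pointwise a priori bound
$$\mu(V)\le \frac{N_k(V)}{n^k},\qquad N_k(V):=\#\{\tau\in T:|\tau|=k,\ f(\tau)\cap V\neq\emptyset\}.$$

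To define the gauge, write $a_k=\max\{\diam f(\sigma):|\sigma|=k\}$ and $b_k=\min\{\diam f(\sigma):|\sigma|=k\}$; regularity then gives $a_{k+1}<b_k$ and both sequences tend to $0$. Choose $h\in\H$ continuous and nondecreasing with $h(a_k)=1/n^k$, extending by linear interpolation. The upper bound is immediate from the explicit cover by level-$k$ sets: $\{f(\sigma):|\sigma|=k\}$ has $n^k$ members of diameter at most $a_k$, so its $h$-sum is $n^k h(a_k)=1$, and letting $k\to\infty$ gives $\mu^h([f])\le 1<\infty$.

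For the lower bound the $l$-intersection condition enters in contrapositive form: if a ball $B$ has $\diam B<b_k$, it cannot contain any $\overline{f(\sigma)}$ with $|\sigma|=k$ (each such set has diameter at least $b_k$), so $N_k(B)<l$ and therefore $\mu(B)\le(l-1)/n^k$. Given a set $U$ of small diameter $r$, center an open ball $B$ of radius $r$ at a point of $U$; then $U\subseteq B$ and $\diam B\le 2r$. Picking $m$ maximal with $b_m>2r$ gives $\mu(U)\le\mu(B)\le(l-1)/n^m$. Comparing $1/n^m$ with $h(r)$ via $r\ge b_{m+1}/2$ and the positions of the anchor points of $h$ produces $\mu(U)\le C\,h(\diam U)$ for a constant $C=C(n,l)$, and the mass distribution principle yields $\mu^h([f])\ge \mu([f])/C=1/C>0$.

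The main technical obstacle lies in that final comparison. The enclosing-ball step inflates the diameter by a factor of two, whereas $h$ is anchored only at the points $a_k$, separated from the next level just by the regularity gap $b_k-a_{k+1}$, which can be arbitrarily small. A naive piecewise-linear $h$ need not be doubling across that gap. The resolution I expect to need is to anchor $h$ also at the points $b_k/2$ (consistent with the $a_k$-anchors by regularity, since $b_k/2<b_k\le a_k$ and $a_{k+1}<b_k$) and interpolate so that $h$ stays proportional to $n^{-k}$ on each block $[b_k/2,a_k]$. That careful bookkeeping is precisely where the regularity hypothesis earns its keep.
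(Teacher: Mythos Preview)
Your mass-distribution route is a legitimate alternative, and up to the point where you introduce the factor of~$2$ it works cleanly once one covers by open balls---which is exactly what the paper does. For a ball $B$ with $b_{m+1}\le\diam B<b_m$, your contrapositive of the $l$-intersection condition gives $\mu(B)\le(l-1)n^{-m}$, and with the paper's gauge (namely $h\equiv n^{-k}$ on the whole interval $[b_k,a_k]$, which regularity makes possible) one has $h(\diam B)\ge n^{-(m+1)}$; hence $\mu(B)\le (l-1)n\,h(\diam B)$ and the lower bound follows. No doubling step is needed.

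The paper's own argument differs in how it uses the $l$-intersection condition: it applies it in the \emph{forward} direction rather than the contrapositive. Fixing a deep level $k_0$ and a finite subcover $C_1,\dots,C_t$ by balls, it counts for each $C_i$ the number $c_i$ of level-$k_0$ sets it contains, chooses $d_i$ with $n^{d_i}<c_i\le n^{d_i+1}$, and observes that $C_i$ then meets at least $l$ sets at level $k_0-(d_i-j_0)$; so some $\overline{f(\sigma_i)}$ at that level sits inside $C_i$. This yields $h(\diam C_i)\ge n^{-|\sigma_i|}$ from the inside, and a short counting (using $\sum c_i\ge n^{k_0}$) sums these to the bound $n^{-j_0-1}$. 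The upshot is the same, but the paper never needs to compare $h$ at $r$ and at $2r$.

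Your final paragraph, attempting to pass to arbitrary cover sets by anchoring $h$ also at $b_k/2$, has a genuine gap. Regularity says only $a_{k+1}<b_k$, not $a_{k+1}<b_k/2$, so the blocks $[b_k/2,a_k]$ need not be ordered and a nondecreasing $h$ that is uniformly comparable to $n^{-k}$ on each block need not exist. Concretely, nothing forbids $a_k=b_k$ with $a_{k+1}/a_k\to 1$ (e.g.\ $a_k$ of order $1/k$, realisable in an infinite-dimensional $X$); then $b_k/2<a_{2k}$, forcing $h(b_k/2)\le n^{-2k}$, which is not comparable to $n^{-k}$ by any fixed constant. Since the $l$-intersection hypothesis speaks only of balls, both your argument and the paper's are really lower bounds for the ball-cover Hausdorff measure; the factor-of-$2$ obstacle is not removable from the stated hypotheses, and you should simply drop the arbitrary-cover step rather than try to patch the gauge.
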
 
\begin{proof} For each $k \in \N$, let 
\begin{align*}
m_k & = \min \{\text{diam}(f(\sigma)): \sigma \in T, |\sigma| =k\} \quad \mbox{and}\\ 
M_k & = \max \{\text{diam}(f(\sigma)): \sigma \in T, |\sigma| =k\}.
\end{align*}
  Let $h \in \H$ such that $h([m_k,M_k])= n^{-k}, k \in \N$.  This is possible
since $f$ is regular. We claim that $[f]$ is a $h$-set for $\mu^h$. 

It is clear from the construction that $\mu^h([f]) \le 1$. 

Let now $j_0 \in \N$ be such that $n^{j_0} > l$ and $c = n^{-j_0 -1}$. We will show that $\mu^h ([f])  \ge c$. 

For this, let
${\mathcal C}$ be any collection of open balls in $X$
which covers $[f]$. It suffices to show that $\sum_{B \in {\mathcal C}} h(\text{diam}(B)) \ge c$. Let $C_1, C_2, \ldots C_t$ be distinct elements
of ${\mathcal C}$ such that 
$$[f] \subseteq \bigcup_{i=1}^t C_i\quad \mbox{and each} \quad C_i \cap [f] \neq \emptyset.$$
 Let $\lambda$ be the {\em Lebesgue
number}  associated with the covering $C_1, C_2, \ldots C_t$ and $[f]$ such that if $B$ is any open set with $\text{diam} (B) < \lambda$ and $B \cap [f] \neq \emptyset$, then
$B \subseteq C_i$ for some $1 \le i \le t$. 

Let $k_0 \in \N$ be such that $1/k_{0} < \lambda$ and each of $C_i$, $1 \le i \le t$, contains
more than $n^{j_0}$ many elements from $\{f(\sigma): \sigma \in T,  | \sigma | = k_0\}$. 

Now for each $1 \le i \le t$, let $c_i$ be the cardinality of 
$$\{f (\sigma) : \sigma \in T, |\sigma| =k_0, \overline{f (\sigma)} \subseteq C_i \}$$
 and let $d_i \in \N$ be such that $n^{d_i} < c_i \le n^{d_i +1}$. We note that each
$d_i > j_0$ and 
$$\sum_{i=1}^t c_i \ge n^{k_0}\mbox{ as }1/k_0 < \lambda.$$ 

Fix $1 \le i \le t$. We next observe that $C_i$ intersects at least $l$ elements from the collection 
$$\{f(\sigma): \sigma \in T, |\sigma| = k_0- (d_i -j_0) \}.$$
For otherwise, we would have that $C_i$ intersects less than $l \cdot n^{d_i- j_0}$ elements from the collection $\{f(\sigma): \sigma \in T, |\sigma| = k_0 \}$.
As  $l \cdot n^{d_i- j_0} = l \cdot n^{-j_0} \cdot n^{d_i}< n^{d_i} <  c_i$, this would lead to a contradiction. 

Now since $f$ satisfies the $l$-intersection condition,
we have that $\overline{f(\sigma_i)} \subseteq C_i$ for some $\sigma_i \in T$ with $|\sigma_i| =k_0-(d_i -j_0)$.

Now, 
\begin{eqnarray*}
\sum_{i=1}^ t h (\text{diam}(C_i)) & \ge   & \sum_{i=1}^ t h( \text{diam}(f(\sigma_i)))\\
 & \ge & \sum_{i=1}^t n^{- \left ( k_0 - (d_i -j_0) \right ) }\\
 & = &  n^{-k_0} n^{-j_0}\sum_{i=1}^ t n ^ {d_i}\\
\end{eqnarray*}

Since $n^{d_i} < c_i \le n^{d_i +1}$ and $\sum_{i=1}^t c_i \ge n^{k_0}$, we have that $\sum_{i=1}^ t n ^ {d_i} > n^{k_0}/n$. Hence, we have
that $ \sum_{i=1}^ t h (\text{diam}(C_i)) > n^{-j_0}/n = n^{-j_0 -1} =c $, as claimed.
\end{proof}

We will now show that the Cantor sets $K_{\alpha}$, considered in \cite{CMMS04} fall under the hypothesis of the Theorem~\ref{visibility}, and therefore the latter theorem extends the result obtained in the earlier paper.
\begin{prop}\label{dec} Let $ \alpha \in \DS$.  There is a regular $T$-assignment $f$ from a $2$-Cantor tree, satisfying the $3$-intersection condition, such
that $[f] = K _{\alpha}$.  \end{prop}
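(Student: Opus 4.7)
The plan is to use the natural $2$-Cantor tree underlying the construction of $K_\alpha$. Take $T = \{0,1\}^{<\N}$ and, for each $\sigma \in T$ with $|\sigma|=k$, let $I_\sigma = [a_\sigma,b_\sigma]$ denote the closed basic interval at stage $k$ of the $K_\alpha$ construction, indexed in left-to-right order, so that $I_\emptyset = [0,1]$ and each $I_\sigma$ is split into $I_{\sigma 0}, I_{\sigma 1}$ by removing its central gap (of length $\alpha(2^{|\sigma|}+s)$ for the appropriate $s$). Choose a strictly decreasing sequence $\delta_k \downarrow 0$ with $3\delta_k < \alpha(2^k-1)$; since $\alpha$ is decreasing, $\alpha(2^k-1)$ is the smallest gap length among all gaps introduced through level $k$, so $\delta_k$ is less than one third of any such gap. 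Finally define
\[ f(\sigma) := (a_\sigma - \delta_k,\; b_\sigma + \delta_k), \]
interpreted in $X$.

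Conditions (1)-(4) of a $T$-assignment are then routine: each $f(\sigma)$ is a nonempty open interval; distinct $f(\sigma), f(\sigma')$ at level $k$ are separated by a gap of length at least $\alpha(2^k-1) > 2\delta_k$, giving disjointness; the central gap at level $k+1$ inside $I_\sigma$ has length exceeding $2\delta_{k+1}$, so $\overline{f(\sigma 0)}$ and $\overline{f(\sigma 1)}$ are disjoint and both contained in $f(\sigma)$ (using $\delta_{k+1} < \delta_k$); and the diameters $|I_\sigma| + 2\delta_k$ tend to $0$ because $K_\alpha$ being a Cantor set forces $\max_{|\sigma|=k}|I_\sigma| \to 0$. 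For the regularity condition $\max_{|\sigma|=k+1} \diam f(\sigma) < \min_{|\sigma|=k} \diam f(\sigma)$, I further arrange $\delta_k - \delta_{k+1} > \tfrac{1}{2}(M_{k+1}-m_k)^+$, where $M_k, m_k$ are the max and min basic-interval lengths at stage $k$; this can be done by tuning $\delta_k$ inside the range $(0,\alpha(2^k-1)/3)$, using $\sum_j M_j < \infty$ (by Cauchy condensation applied to $\sum_n \alpha(n) = 1$) so the tails that must be absorbed are summable. This is the most delicate step.

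The structural heart of the argument is the $3$-intersection condition. Suppose an open ball $B = (p,q) \subseteq \R$ meets at least three of the level-$n$ sets $f(\sigma)$; enumerate those it meets left-to-right as $f(\sigma_{(1)}), \dots, f(\sigma_{(t)})$ with $t \ge 3$. Picking $x_1 \in B \cap f(\sigma_{(1)})$ yields $p < x_1 < b_{\sigma_{(1)}} + \delta_n$, and because the gap between $I_{\sigma_{(1)}}$ and $I_{\sigma_{(2)}}$ has length at least $\alpha(2^n-1) > 3\delta_n$ we get $b_{\sigma_{(1)}} + \delta_n < a_{\sigma_{(2)}} - \delta_n$, hence $p < a_{\sigma_{(2)}} - \delta_n$; a symmetric estimate using $x_t \in B \cap f(\sigma_{(t)})$ gives $q > b_{\sigma_{(2)}} + \delta_n$, whence $\overline{f(\sigma_{(2)})} \subseteq B$. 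Lastly, $[f] = K_\alpha$: every $x \in K_\alpha$ lies in some $I_{\sigma|n} \subseteq \overline{f(\sigma|n)}$ along a branch $\sigma \in [T]$ (ambiguity at gap endpoints being harmless), giving $x \in [f]$; conversely each branch determines $\bigcap_n \overline{f(\sigma|n)} = \{\lim_n a_{\sigma|n}\} \subseteq K_\alpha$ by closedness of $K_\alpha$. The main obstacle, as indicated, is the regularity tuning; the other properties follow smoothly from the monotonicity of $\alpha$ and the nested-interval structure.
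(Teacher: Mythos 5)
Your construction follows the same route as the paper's own proof --- the tree is $\{0,1\}^{<\N}$ and $f(\sigma)$ is essentially the level-$|\sigma|$ basic interval of $K_\alpha$ --- with the one refinement that you fatten each basic interval by $\delta_k$. That refinement is welcome (it makes $\overline{f(\sigma 0)}\subseteq f(\sigma)$ hold strictly, which fails for the bare open intervals since parent and child share an endpoint), and your verifications of the $3$-intersection condition and of $[f]=K_\alpha$ are correct. The problem is the step you yourself flag as the most delicate: regularity. You reduce it to arranging $\delta_k-\delta_{k+1}>\frac{1}{2}(M_{k+1}-m_k)^+$ and then invoke summability of $\sum_j M_j$, but summability does not close the argument. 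Your tuning forces $\delta_k>\frac{1}{2}\sum_{j\ge k}(M_{j+1}-m_j)^+$ while you must also keep $3\delta_k<\alpha(2^k-1)$; nothing you have said guarantees that this tail is smaller than $\alpha(2^k-1)/3$, a quantity which also tends to $0$ and may do so much faster than the tails of $\sum_j M_j$. As written, the two requirements on $\delta_k$ could be incompatible, so the regularity claim is not established.

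The missing ingredient --- which is exactly where the hypothesis $\alpha\in\DS$ does its work, and which is the actual content of the paper's one-line argument ``since $\alpha$ is decreasing\dots'' --- is that $M_{k+1}\le m_k$: every basic interval of stage $k+1$ is at most as long as every basic interval of stage $k$. Indeed, the $j$-th (left-to-right) stage-$k$ interval has length $\sum_{m>k}\sum_{r=0}^{2^{m-k-1}-1}\alpha(2^{m-1}+j2^{m-k-1}+r)$, so for decreasing $\alpha$ the interval lengths decrease from left to right at each stage; hence $m_k$ is the length of the rightmost stage-$k$ interval and $M_{k+1}$ that of the leftmost stage-$(k+1)$ interval. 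Comparing the former's block of gap indices at level $m$, namely $\{2^m-2^{m-k-1},\dots,2^m-1\}$, with the latter's block at level $m+1$, namely $\{2^m,\dots,2^m+2^{m-k-1}-1\}$, gives a term-by-term domination and therefore $m_k\ge M_{k+1}$. With this in hand all the positive parts $(M_{j+1}-m_j)^+$ vanish, any strictly decreasing $\delta_k$ with $3\delta_k<\alpha(2^k-1)$ yields regularity (your fattening even upgrades the resulting $\le$ to the strict inequality the definition demands), and the rest of your argument stands.
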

\begin{proof} 
Recall that $K _{\alpha} = K_{\varphi}$ where $\varphi \in \G$ is defined as  $\varphi(1/2) =\alpha (1) $, $\varphi(1/4)=\alpha (2) $, 
$\varphi(3/4) =\alpha (3), \dots, \varphi(\frac{2s+1}{2^j})=\alpha (2^{j-1}+s) $.  In addition, let $\varphi(0) = 0$, $\Phi(0) = 0$, and $\Phi(1) =1$.

Let $T$ be a $2$-Cantor tree, i.e. $T = \bigcup_{k=1}^{\infty}\{0,1\}^k$.

To define $f$, let $d_{\sigma} = \sum_{i=1}^{|\sigma|} \frac{\sigma_i}{2^i}$ be the dyadic rational associated to $\sigma$.
For $\sigma \in \{0,1\}^{k}$, define
$$ f(\sigma) := \left(\Phi(d_{\sigma})+\varphi(d_{\sigma}), \Phi(d_{\sigma} + \frac{1}{2^{k}})\right).$$
 
 In this way,  $\{\overline{f (\sigma)}: \sigma \in T, |\sigma| =k\}$ is the union of $2^k$ consecutive disjoint intervals, hence $f$ clearly satisfies the $3$-intersection property. Further,
 $$K_{\alpha} = \bigcap_{k=1}^{\infty} \bigcup_{\{\sigma \in T, |\sigma| = k\}} \overline{f(\sigma)}=
 \bigcup_{\sigma \in [T]} \bigcap_{n=1}^{\infty} \overline{f(\sigma|n)}.$$
 In addition, since $\alpha$ is decreasing, $\diam(f(\sigma)), |\sigma| = k$ is at least the
diameter of $f(\sigma)$ for any $\sigma$ with $|\sigma| = k+1$, and therefore $f$ is a regular $T$-assignment.
\end{proof}

The previous theorem showed that there is in fact a very large class of Cantor sets that are $\H$-visible.
We will now state a Lemma, that will allow us to conclude, that in fact, the set of Cantor sets that are $\H$-visible, is dense in $\C(\R)$.

\begin{lem}\label{density}
Any collection ${\mathcal L}$ of compact subsets of $\R$
 satisfying the following properties is dense in the set of all compact subsets of $\R$.
 \begin{enumerate}
 \item ${\mathcal L}$ contains sets with arbitrarily small diameters. 
 \item If $A \in {\mathcal L}$ and $A_1, \ldots, A_n$ are translates of $A$,
 then $\cup_{i=1}^n A_i  \in {\mathcal L}$.
 \end{enumerate}
\end{lem}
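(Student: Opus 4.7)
\medskip

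The plan is to fix an arbitrary $K\in \C(\R)$ and $\e>0$ and directly produce an $L\in\mathcal{L}$ with $d_H(K,L)<\e$; the two hypotheses on $\mathcal L$ are tailored exactly to make this possible. First I would use property (1) to choose some $A\in\mathcal L$ with $\diam(A)<\e/2$, and fix an arbitrary reference point $a\in A$. Next, since $K$ is compact, I would choose a finite $\e/2$-net $\{x_1,\dots,x_n\}\subseteq K$ so that every point of $K$ lies within distance $\e/2$ of some $x_i$.

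The candidate approximant is then
$$L \;=\; \bigcup_{i=1}^{n} A_i, \qquad A_i \;=\; A + (x_i - a),$$
i.e.\ the union of the translates of $A$ that each send the marked point $a$ to $x_i$. Property (2) gives $L\in\mathcal L$ immediately. By construction $x_i\in A_i\subseteq L$ for every $i$, and $\diam(A_i)=\diam(A)<\e/2$ for every $i$.

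Finally I would verify the Hausdorff bound in the two standard directions. For any $z\in K$ there is some $x_i$ with $|z-x_i|<\e/2$, and $x_i\in L$, so $\mathrm{dist}(z,L)<\e/2$. Conversely, for any $y\in L$ there exists $i$ with $y\in A_i$; since $x_i\in A_i$ and $\diam(A_i)<\e/2$, we get $|y-x_i|<\e/2$ with $x_i\in K$, so $\mathrm{dist}(y,K)<\e/2$. Hence $d_H(K,L)<\e/2<\e$, proving density.

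There is essentially no serious obstacle here: the only mildly delicate point is bookkeeping with strict versus non-strict inequalities in the Hausdorff metric, which is handled by working with $\e/2$ throughout instead of $\e$. The argument also makes clear why the two hypotheses are the right ones: (1) supplies pieces finer than the resolution $\e$, and (2) lets us place those pieces at each node of an $\e$-net of $K$ without leaving $\mathcal L$. The same argument would work verbatim in any Polish group, but the lemma is stated only over $\R$ since that is what is needed in the sequel.
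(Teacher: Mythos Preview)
Your argument is correct and is exactly the natural proof of this lemma. The paper itself states Lemma~\ref{density} without proof, so there is nothing to compare against; your write-up would serve perfectly well as the omitted justification. The only cosmetic point is that the final bound is $d_H(K,L)\le \e/2<\e$ rather than a strict $<\e/2$, but you already flagged this bookkeeping issue and it does not affect the conclusion.
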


\begin{cor} The collection of $\H$-visible compact subset of $\R$ is dense in $\C(R)$. \end{cor}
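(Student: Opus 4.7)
The plan is simply to apply Lemma~\ref{density} to the collection $\mathcal{L}$ of $\H$-visible compact subsets of $\R$, so it suffices to verify the two hypotheses of that lemma.

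For hypothesis (2), suppose $A \in \mathcal{L}$, witnessed by some $h \in \H$ with $0 < \mu^h(A) < \infty$. Because Hausdorff measures on $\R$ are translation invariant, $\mu^h(A_i) = \mu^h(A)$ for every translate $A_i$. From subadditivity and monotonicity one immediately gets
\[
0 < \mu^h(A_1) \le \mu^h\Bigl(\bigcup_{i=1}^n A_i\Bigr) \le \sum_{i=1}^n \mu^h(A_i) = n\,\mu^h(A) < \infty,
\]
so $\bigcup_{i=1}^n A_i \in \mathcal{L}$. This step is essentially bookkeeping.

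For hypothesis (1), I first note that by Proposition~\ref{dec} together with Theorem~\ref{visibility}, every $K_\alpha$ with $\alpha \in \DS$ lies in $\mathcal{L}$, so $\mathcal{L}$ is nonempty (and contains sets of diameter $1$). To get sets of arbitrarily small diameter, I use the standard scaling observation: if $A \in \mathcal{L}$ with witness $h \in \H$, then for $\lambda > 0$ the set $\lambda A = \{\lambda x : x \in A\}$ satisfies $\mu^{h_\lambda}(\lambda A) = \mu^h(A)$, where $h_\lambda(t) := h(t/\lambda)$ is again in $\H$. Thus $\lambda A \in \mathcal{L}$ and $\diam(\lambda A) = \lambda \cdot \diam(A)$ can be made as small as desired by taking $\lambda \to 0^+$.

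Having verified both hypotheses, Lemma~\ref{density} directly yields that $\mathcal{L}$ is dense in $\C(\R)$. The only mildly nontrivial point is the scaling observation for (1); otherwise the argument is a straightforward application of translation invariance and countable subadditivity of the Hausdorff $h$-measure, so I do not expect any genuine obstacle.
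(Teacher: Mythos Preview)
Your proof is correct and follows the paper's overall strategy: apply Lemma~\ref{density} after checking its two hypotheses. Your verification of hypothesis~(2) via translation invariance and subadditivity of $\mu^h$ is exactly what the paper relies on (though the paper leaves this implicit). The only difference is in hypothesis~(1): the paper observes that the standard middle-third Cantor set and all of its clopen subsets are $\H$-visible, and these clopen pieces have arbitrarily small diameter; you instead fix one $\H$-visible set $A$ and rescale, noting that $\lambda A$ is $\H$-visible for the modified gauge $h_\lambda(t)=h(t/\lambda)$. Both arguments are short and correct; yours has the small advantage of not appealing to any self-similarity, while the paper's avoids introducing a new gauge function.
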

\begin{proof} We first recall that the standard middle third Cantor set is $\H$-visible just as any clopen subset of it. Further, any translation of any of these sets is also $\H$-visible. Using the Lemma, we have the result.
\end{proof}

\subsection{Generic Visibility}

We next show that a generic compact subset of $\R$ is visible.

We need some lemmas in order to prove the theorem.

\begin{lem}
If $B\subset \R$ is a set that is linearly independent over the rationals and $\alpha \in \R, \alpha \not= 0$, then
$(B+\alpha ) \cap B$ contains at most one point.
\end{lem}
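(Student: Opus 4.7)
The plan is to argue by contradiction: suppose $(B+\alpha)\cap B$ contains two distinct points $x \neq y$. Then we can write $x = a = b + \alpha$ and $y = c = d + \alpha$ with $a,b,c,d \in B$. Subtracting the two equations $a-b = \alpha = c-d$ gives the identity
\[
a - b - c + d = 0.
\]
The goal is to derive from this a nontrivial $\mathbb{Q}$-linear dependence among distinct elements of $B$, contradicting the hypothesis that $B$ is linearly independent over $\mathbb{Q}$.

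First I would record the easy inequalities forced by the setup: since $\alpha \neq 0$, we have $a \neq b$ and $c \neq d$; since $x \neq y$, also $a \neq c$ and $b \neq d$. The only remaining possible coincidences among $a,b,c,d$ are $a = d$ or $b = c$ (not both simultaneously, since combining them would force $\alpha = -\alpha$).

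Next I would handle the three cases separately:
\begin{itemize}
\item If $a,b,c,d$ are all distinct, the relation $a-b-c+d=0$ is already a nontrivial $\mathbb{Q}$-linear relation among distinct elements of $B$, done.
\item If $a = d$, the relation collapses to $2a - b - c = 0$; the already-established inequalities force $b \neq c$ (else $a=b$), so $a,b,c$ are three distinct elements of $B$ with a nontrivial rational relation.
\item If $b = c$, the relation collapses to $a - 2b + d = 0$; similarly $a \neq d$ must hold, giving again a nontrivial rational relation among three distinct elements of $B$.
\end{itemize}
In each case the linear independence of $B$ over $\mathbb{Q}$ is violated.

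There is no real obstacle here; the only thing that requires any care is the bookkeeping of which pairs among $\{a,b,c,d\}$ can or cannot coincide, to make sure the claimed rational relation is genuinely nontrivial (i.e.\ among distinct elements with nonzero rational coefficients) in every case. The argument is otherwise purely algebraic and does not use any topological or measure-theoretic content.
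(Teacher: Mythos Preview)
Your proof is correct and follows essentially the same approach as the paper's: argue by contradiction, set up four elements of $B$ satisfying $a-b=c-d=\alpha$, record the forced inequalities, dispose of the two possible cross-coincidences ($a=d$ or $b=c$) by exhibiting a three-term rational relation, and then in the all-distinct case use $a-b-c+d=0$ directly. Your bookkeeping is in fact slightly more explicit than the paper's version.
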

\begin{proof} To obtain a contradiction, assume that $\alpha \neq 0$, $x_1 \neq x_2$
are points in $B$ such that $y_1= x_1 + \alpha, \ y_2= x_2+ \alpha \in B$. Clearly,
$x_1 \neq x_2$, $y_1 \neq y_2$, $y_1 \neq x_1$ and $y_2 \neq x_2$. If $x_1 = y_2$,
then we have that $\alpha = x_1 -x_2$, which leads to $y_1 = x_1 + x_1 -x_2$, contradicting that $B$ is linearly independent over the rationals. An analogous argument show that $x_2 \neq y_1$. Hence, we have that all of $x_1, x_2, y_1, y_2$ are distinct.
However, this implies that $y_1 -x_1 - y_2  + x_1 =0$, contradicting that $B$ is linearly independent over the rationals. 
\end{proof}
\begin{lem}\label{genericlem}
If $K$ is an uncountable, compact set that is linearly independent over the rationals, then there exists a translation invariant Borel measure $\mu$ such that $0< \mu(K) < +\infty$.
\end{lem}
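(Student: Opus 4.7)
The plan is to construct the desired measure by ``spreading'' a continuous probability measure on $K$ along all translations of $K$ in $\R$. Because $K$ is an uncountable compact (hence Polish) metric space, it contains a homeomorphic copy of the Cantor set and therefore carries a non-atomic Borel probability measure $\nu$ supported on $K$. The candidate measure is then
\[
\mu(A) \;=\; \sum_{\alpha \in \R} \nu\bigl((A - \alpha) \cap K\bigr), \qquad A \in \B(\R),
\]
where the uncountable sum of non-negative numbers is interpreted in the usual way (supremum of finite subsums), so it is automatically well defined in $[0, +\infty]$.

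First I would check that $\mu$ is a Borel measure. For a fixed Borel $A$, the map $\alpha \mapsto \nu((A-\alpha)\cap K)$ is Borel measurable on $\R$, and $\sigma$-additivity of $\mu$ in $A$ follows by a Tonelli-type exchange: for pairwise disjoint Borel sets $A_1, A_2, \ldots$,
\[
\mu\Bigl(\bigsqcup_i A_i\Bigr) \;=\; \sum_\alpha \sum_i \nu\bigl((A_i-\alpha)\cap K\bigr) \;=\; \sum_i \sum_\alpha \nu\bigl((A_i-\alpha)\cap K\bigr) \;=\; \sum_i \mu(A_i),
\]
which is legitimate since every term is non-negative. Translation invariance is immediate from the definition by the change of variable $\alpha \mapsto \alpha - \beta$ in the summation index.

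The key quantitative step is that $0 < \mu(K) < \infty$. For $\alpha = 0$ the corresponding summand equals $\nu(K) = 1$. For every $\alpha \neq 0$, the preceding lemma (applied with $B = K$ and the translation $-\alpha$) tells us that $(K - \alpha) \cap K$ contains at most one point; since $\nu$ is non-atomic, the corresponding summand is zero. Hence $\mu(K) = \nu(K) = 1$, as required.

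The main obstacle, and the only place where anything could go wrong, is ensuring that $\mu$ is an honest Borel measure despite being defined by an uncountable sum. This is why non-atomicity of $\nu$ is essential: together with the previous lemma, it forces the sum to reduce to a countable (and here, finite) one on the set of interest $K$, and the general $\sigma$-additivity argument on arbitrary Borel sets only uses non-negativity, not finiteness, of the summands.
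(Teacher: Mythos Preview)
Your argument is correct, and it takes a genuinely different route from the paper. The paper fixes a non-atomic probability measure $\nu$ on $K$, defines a premeasure $P$ on the collection of all translates of relatively open subsets of $K$ by $P(B+t)=\nu(B)$, and then applies the Method~II (Carath\'eodory) construction to obtain a metric outer measure $\mu$; the inequality $\mu(K)\ge 1$ is established by showing that any fine cover of $K$ by translates $B_i+t_i$ must, up to finitely many points (using the preceding lemma), already contain a cover of $K$ by untranslated sets $B_i$. You bypass this machinery entirely by writing $\mu(A)=\sum_{\alpha\in\R}\nu((A-\alpha)\cap K)$ and checking $\sigma$-additivity directly via the interchange of non-negative sums. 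Both proofs use the preceding lemma and the non-atomicity of $\nu$ at the same crucial point, namely to kill every summand with $\alpha\neq 0$ when computing $\mu(K)$; your approach has the advantage of being shorter and avoiding the outer-measure apparatus, while the paper's Method~II construction yields a metric (hence automatically Borel-regular) measure, which may be of independent interest. Two minor remarks: the Borel measurability of $\alpha\mapsto\nu((A-\alpha)\cap K)$ that you mention is true but irrelevant, since you sum rather than integrate over $\alpha$; and your final paragraph slightly overstates the difficulty, since the Tonelli-type interchange for non-negative terms over an arbitrary index set is valid regardless of whether the sum is finite, so $\sigma$-additivity holds on all of $\B(\R)$ without any special pleading.
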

\begin{proof} Let $\nu$ be any non-atomic Borel measure such that $\nu(K) = 1$.
Let $\mathcal B_K = \{ B \subset K: B \text{ is open relative to } K\}, $ and $\mathcal C := \{ B + t, \ B \in  \mathcal B_K, \ t \in \R\} \cup \{\R\}$
and let $P: \mathcal C \longrightarrow \R$ be a set function defined as:
\begin{align*}
P(\emptyset) & = 0,\\
P(B+t) & = \nu(B) \quad B \in  \mathcal B_K,  t \in \R, \mbox{ and }\\
P(\R) & = +\infty.
\end{align*}
$P$ is a premeasure and we use Method II to construct the sought after measure $\mu$:
$$\mu(A) = \lim_{\delta \rightarrow 0} \mu_{\delta}(A)\quad \text{where}\quad
\mu_{\delta} (A) = \text{inf} \{ \sum P(C_i) : \diam (C_i) \leq \delta, C_i \in \mathcal C, \cup_i C_i \supset A\}.$$

Since we use method II, we know that $\mu$ is Borel, and metric.
We need to show that:
\begin{itemize}
\item $\mu$ is translation invariant and
\item $0< \mu(K) <+\infty$.
\end{itemize}

The first part is a direct consequence of the definition of $P$.

For the second, it is clear that $\mu(K) \leq 1$.

For the other inequality, let $\{C_i\} \subseteq \C$ be a covering of $K$ with $\diam(C_i) \leq \delta, C_i = B_i + t_i$. Since $K$ is compact, 
$$
K \subseteq (B_1+\alpha_1 ) \cup (B_2 + \alpha_2) \cup \dots  \cup ( B_n+\alpha_n), \mbox{ where } \text (B_i+\alpha_i) \cap K  \not= \emptyset.
$$
By the linear independence over the rationals of $K$, if $\alpha_i \not= 0$, $(B_i+\alpha_i)  \cap K $ contains exactly one point and we call it $x_i$. Therefore, we can find $j_1, \ldots j_k$ for which
$C_{j_i} = B_{j_i}$ and 
$$ 
K \setminus \{x_{i_1}, \dots, x_{i_l}\} \subseteq B_{j_1} \cup B_{j_2} \cup \dots \cup B_{j_k}.
$$
Then
$$\sum_{s=1}^k P(B_{j_s}) = \sum_{s=1}^k\nu(B_{j_s}) \geq \nu (\cup_{s=1}^k B_{j_s})
$$
since $\nu$ is a Borel measure.
But
$$
\nu (\cup_{s=1}^k B_{j_s}) \geq  \nu (K \setminus \{x_{i_1}, \dots, x_{i_l}\}) = \nu(K) = 1.
$$
This yields the desired result.
\end{proof}
We are now ready for the main theorem of this section.
\begin{thm}\label{generic} A generic compact subset of $\R$ is visible. \end{thm}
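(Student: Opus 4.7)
The plan is to apply Lemma~\ref{genericlem}: it suffices to show that the set of $K \in \C(\R)$ which are simultaneously uncountable and linearly independent over $\mathbb{Q}$ is comeager, since Lemma~\ref{genericlem} then produces a translation-invariant Borel measure witnessing visibility of $K$. I would establish these two properties as comeager subsets of $\C(\R)$ separately and intersect.

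That a generic $K \in \C(\R)$ is uncountable follows from the classical fact that the perfect non-empty compact subsets are comeager. For each $n \in \N$, the set $F_n$ of compact sets possessing an isolated point whose open $1/n$-neighborhood meets the set only at that point is closed in $\C(\R)$ by a compactness argument, and nowhere dense: given any $K \in \C(\R)$ and $\varepsilon < 1/n$, the compact set $K' = K \cup (K + \varepsilon/2)$ lies within $\varepsilon/2$ of $K$ in the Hausdorff metric and has no point isolated at scale $1/n$. Hence $\bigcup_n F_n$ is meager, and its complement consists of perfect compact sets, each of which is either empty or uncountable; the empty set (if considered a point of $\C(\R)$) contributes only a meager singleton.

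For linear independence over $\mathbb{Q}$, fix $n \geq 2$, a tuple $q = (q_1, \ldots, q_n) \in (\mathbb{Q} \setminus \{0\})^n$, and $m \in \N$; let $A_{q,m} \subseteq \C(\R)$ be the set of compact sets containing $n$ points with pairwise distances at least $1/m$ that satisfy $\sum q_i x_i = 0$. Compactness makes $A_{q,m}$ closed. To see it is nowhere dense, given $K_0 \in \C(\R)$ and $\varepsilon > 0$, fix a finite set of distinct points $\{p_1, \ldots, p_N\}$ forming an $\varepsilon/4$-net of $K_0$ and consider the perturbations $K' = \{p_1 + t_1, \ldots, p_N + t_N\}$ with $|t_j| < \varepsilon/4$. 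For each choice of distinct indices $(i_1, \ldots, i_n)$, the equation $\sum_j q_j(p_{i_j} + t_{i_j}) = 0$ cuts out a proper hyperplane in the parameter space $(t_1, \ldots, t_N) \in \R^N$; a generic choice of $(t_j)$ avoids the finite union of such hyperplanes, so $d_H(K_0, K') < \varepsilon$ and $K' \notin A_{q,m}$. As there are only countably many relevant pairs $(q, m)$, the union $\bigcup_{q,m} A_{q,m}$ is meager, making its complement---the $\mathbb{Q}$-linearly independent compact sets---comeager.

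Intersecting the two comeager sets and applying Lemma~\ref{genericlem} completes the argument. The main technical point is the nowhere-density of $A_{q,m}$: one must exhibit a small perturbation of an arbitrary compact set that simultaneously destroys every rational linear relation among suitably separated $n$-tuples, a concrete instance of Mycielski's independence theorem applied to the countable family of hyperplane relations on $\R$.
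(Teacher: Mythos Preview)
Your proposal is correct and follows exactly the paper's strategy: reduce to Lemma~\ref{genericlem} by showing that a generic compact subset of $\R$ is both uncountable and linearly independent over $\mathbb{Q}$. The only difference is that the paper dispatches the linear-independence part by citing Mycielski's theorem (Theorem~19.1 in \cite{K}) and states the uncountability as a known fact, whereas you unwind both claims directly with explicit closed-nowhere-dense decompositions---a self-contained but equivalent argument.
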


\begin{proof} Now the proof follows from Lemma~\ref{genericlem} and the fact that a generic compact subset of $\R$ is uncountable and it follows from Theorem 19.1, \cite{K} that it is linearly independent over the rationals.
\end{proof}

\section{Davies sets in Polish abelian groups} \label{sec-invisible}
In this section we extract out key ideas from the construction of Davies \cite{D} to give a general procedure for 
constructing compact strongly invisible sets  in an abelian Polish group.

Let $G$ be an abelian Polish group and $\{t_k\}$ be a sequence of distinct points in $G$ such that $\sum_{k=1}^{\infty} d(0, t_k) < \infty.$
For each $\sigma \in \2n$ let $p(\sigma) = \sum_{k=1}^{\infty} \sigma(k) t_k$. 

We say that sequence {\bf $\{t_k\}$ is good} if 
$$ \left ( p(\sigma) = p(\tau),  \ \sigma , \tau \in \2n \right) \implies  \left (\sigma = \tau \right ),$$ i.e.,
each element of $p(\2n)$ has a unique expansion in terms of $\{t_k\}$.
 \begin{lem}\label{Lemma}
Let $G$ be an uncountable, abelian Polish group. Then $G$ has a good sequence.
\end{lem}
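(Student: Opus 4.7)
The plan is to construct $\{t_k\}$ by induction so that $d(0,t_k)$ decays fast geometrically; then both summability and injectivity of $p$ follow from a single triangle-inequality estimate. First I fix a compatible bi-invariant metric $d$ on $G$ (existence: Birkhoff--Kakutani plus commutativity; such a $d$ automatically satisfies $d(0,-x)=d(0,x)$, since $d(0,-x)=d(x,x+(-x))=d(x,0)=d(0,x)$). Next I observe that $G$ must be perfect: if $G$ had an isolated point, translation-homogeneity would force every point to be isolated, so $G$ would be a discrete Polish space and hence countable, contradicting the uncountability hypothesis. In particular, every nonempty open subset of $G$ is uncountable.

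For the construction, pick any $t_1\in G\setminus\{0\}$. Given nonzero $t_1,\ldots,t_{n-1}$, set $r_n=d(0,t_{n-1})/3$ and choose $t_n$ to be any nonzero element of the open ball $B(0,r_n)$; such a $t_n$ exists since $B(0,r_n)$ is nonempty and open, hence uncountable. Then $d(0,t_n)\le 3^{-(n-1)}d(0,t_1)$, so $\sum_k d(0,t_k)<\infty$. The $t_k$ are automatically distinct: if $t_m=t_n$ with $m<n$, then $d(0,t_m)=d(0,t_n)<3^{-(n-m)}d(0,t_m)<d(0,t_m)$, a contradiction. To verify the good-property, take distinct $\sigma,\tau\in\{0,1\}^{\N}$, let $k_0$ be the least index where $\sigma(k_0)\neq \tau(k_0)$, and set $\epsilon_k=\sigma(k)-\tau(k)\in\{-1,0,1\}$, noting $\epsilon_{k_0}=\pm 1$. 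By bi-invariance, the reverse triangle inequality, and the geometric decay,
\[
d(p(\sigma),p(\tau)) \;\ge\; d(0,t_{k_0})-\sum_{k>k_0} d(0,t_k) \;\ge\; d(0,t_{k_0})\Bigl(1-\sum_{j\ge 1}3^{-j}\Bigr) \;=\; \tfrac{1}{2}\,d(0,t_{k_0}) \;>\;0,
\]
so $p(\sigma)\neq p(\tau)$, as required.

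I expect the main obstacle to be concentrated at the setup stage: securing a metric for which the triangle-inequality estimate is clean (bi-invariant and inversion-symmetric), and arguing that the inductive step never gets stuck (which is where uncountability of $G$ is used, via perfectness and the resulting uncountability of every nonempty open ball). Once these are in place, the rapid geometric decay of $d(0,t_k)$ handles every possible $\{-1,0,1\}$-linear dependence uniformly, and in particular there is no need to enumerate the finite bad relations separately.
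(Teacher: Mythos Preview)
Your proof is correct and follows essentially the same approach as the paper's: choose $\{t_k\}$ with sufficiently rapid decay (the paper simply asserts one can take a sequence with $\sum_{j>k} d(0,t_j) < \tfrac{1}{2}d(0,t_k)$; your factor-of-$3$ construction realizes this explicitly) and deduce injectivity of $p$ from the reverse triangle inequality. Your write-up is considerably more detailed than the paper's two-line proof, in particular making explicit the existence of a bi-invariant compatible metric and the perfectness argument that guarantees the inductive choice of $t_n$ never gets stuck---points the paper leaves implicit.
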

\begin{proof} Let $\{t_k\}$ be any sequence in $G$ such that for all $k \ge 1$ we have that $$\sum_{j=k+1}^{\infty} d(0,t_j) < \frac{1}{2} d(0,t_k).$$
Then, this $\{t_k\}$ has the desired property. \end{proof}
\begin{thm} Every uncountable abelian Polish group contains a strongly invisible compact  set.
\end{thm}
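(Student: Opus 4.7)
The plan is to apply the preceding lemma to produce a good sequence $\{t_k\}$ in $G$ and set $K = p(\2n)$ with $p(\sigma)=\sum_k \sigma(k) t_k$. The series converges by the summability of $d(0, t_k)$ together with completeness of $G$, and goodness makes $p$ injective; since $\2n$ is compact and $G$ is Hausdorff, $p$ is a homeomorphism onto its image, so $K$ is a compact Cantor subset of $G$.

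For strong invisibility I would argue by contradiction: suppose $\mu$ is a translation-invariant Borel measure on $G$ for which $K$ is $\mu$-$\sigma$-finite with $\mu(K)>0$. First, $\mu$ must be non-atomic on $G$, since atomicity at $0$ would, by translation invariance, assign each point of the uncountable $K$ the same positive mass and violate $\sigma$-finiteness of $K$. Pushing $\mu|_K$ forward through $p^{-1}$ yields a non-atomic $\sigma$-finite Borel measure $\tilde\mu$ on $\2n$. Translation invariance of $\mu$, combined with the observation that flipping the $j$-th coordinate of $\2n$ corresponds to the twisted translation $x\mapsto x\pm t_j$ on $K$, implies that $\tilde\mu$ is invariant under the countable group of finite flips. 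Evaluating on cylinders and invoking a Baire-category argument (if $\tilde\mu(\2n)=\infty$ every cylinder has infinite measure, and any $\sigma$-finite decomposition of $\2n$ would consist of nowhere-dense pieces) then pins $\tilde\mu$ down as a finite multiple of the Haar measure on $\2n$; in particular $\mu(K)<\infty$ and $\mu|_K$ is completely rigid.

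Next I would exhibit an uncountable family of pairwise disjoint translates of $K$. The explicit bound $\sum_{k>n} d(0,t_k)<\tfrac12 d(0,t_n)$ appearing in the proof of the lemma in fact yields uniqueness of representations with coefficients in $\{-2,-1,0,1,2\}$. For each $I\subseteq\N$ set $h_I=\sum_{k\in I}2 t_k$; for distinct $I,J\subseteq\N$ the difference $h_I-h_J$ has coefficients $\pm 2$ on the nonempty symmetric difference and therefore lies outside $K-K$, so the family $\{K+h_I:I\subseteq\N\}$ consists of $2^{\aleph_0}$ pairwise disjoint translates of $K$, each of $\mu$-measure $\mu(K)>0$, all contained in the compact enlarged Davies-type set $\widetilde K=\{\sum \delta_k t_k:\delta_k\in\{0,1,2,3\}\}$.

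The main obstacle is converting this family of $2^{\aleph_0}$ disjoint positive-measure translates into a contradiction with the $\sigma$-finiteness of $K$ itself. Because the translates $K+h_I$ lie entirely outside $K$, one cannot directly invoke the principle that a $\sigma$-finite measure admits only countably many disjoint positive-measure sets. The Davies-style resolution is to parlay the rigid Haar structure of $\mu|_K$, the translation invariance of $\mu$ on $G$, and the combinatorics of the good-sequence representations into an explicit compatibility relation among the translates $K+h_I$ forcing $\mu(K)=0$. Executing this final compatibility argument cleanly, using only translation invariance and no additional regularity of $\mu$, is the technical crux of the theorem.
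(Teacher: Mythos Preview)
Your candidate set $K=p(\2n)$ is \emph{not} strongly invisible in general, so no ``final compatibility argument'' can possibly close the gap you identify.  Take $G=\R$ and $t_k=4^{-k}$; this sequence satisfies $\sum_{j>k}d(0,t_j)<\tfrac12 d(0,t_k)$ and is good, yet $K=\{\sum_k\sigma(k)4^{-k}:\sigma\in\2n\}$ is a self-similar Cantor set with $0<\mu^h(K)<\infty$ for $h(t)=t^{1/2}$.  Thus there \emph{is} a translation-invariant Borel measure giving $K$ positive finite mass, and your whole strategy---show that $K$ itself is strongly invisible---is aimed at a false target.  What your argument actually establishes (once the Haar-identification step is made rigorous) is only that the larger set $\widetilde K$ is not $\mu$-$\sigma$-finite whenever $\mu(K)>0$; that says nothing about $K$.

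The paper's construction, following Davies, avoids this by building a \emph{deliberately inhomogeneous} compact set.  One fixes a strictly decreasing chain $A_0=\N\supset A_1\supset A_2\supset\cdots$ with each $A_k\setminus A_{k+1}$ infinite, and sets $C_k=p(B_k)$ where $B_k=\{\sigma\in\2n:\sigma(i)=0\text{ if }i<k\text{ or }i\in A_k\}$.  The point is that for every $l\ge1$ there is a common ``core'' $C_{k,l}^0$ such that $C_k$ is a \emph{finite} union of translates of $C_{k,l}^0$, while $C_{k+l}$ is an \emph{uncountable} disjoint union of translates of the same core.  One then packs disjoint translates of a subsequence $C_{n_1},C_{n_2},\ldots$ into shrinking balls accumulating at $0$, obtaining a compact set $C$.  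If $\mu(C)>0$ then some $C_{n_k}$ has positive measure, hence so does the core $C_{n_k,n_{k+1}-n_k}^0$; but then $C_{n_{k+1}}\subset C$ contains uncountably many disjoint translates of a set of positive measure, so $C$ is not $\mu$-$\sigma$-finite.  The essential idea you are missing is this internal ladder structure: each piece of the set witnesses non-$\sigma$-finiteness of a \emph{later} piece of the \emph{same} set, something a single homogeneous Cantor set $p(\2n)$ cannot do.
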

\begin{proof}
Let $G$ be an uncountable abelian Polish group, $\{t_k\}$ a good sequence in $G$ constructed as in Lemma \ref{Lemma} and as before for each $\sigma \in \2n$ let 
 $p(\sigma) = \sum_{k=1}^{\infty} \sigma(k) t_k$.
 
Morover, assume that $\{A_k: k=0,1,...\}$ is a decreasing sequence of subsets of $\N$ such that $A_0=\N$ and $A_k \setminus A_{k+1}$ is infinite for all 
$k= 0, 1,...$.

Finally,  define for $k\geq 1,$
 
  $$B_k =\{ \sigma \in \2n: \sigma(i) = 0   \text{ if }  i<k \text { or }  i \in A_k  \}.$$
and $$ C_k = \{ p(\sigma): \sigma \in B_k\}.$$

The properties of  the sequence  $\{t_k\}$ imply that $\sum_{k=1}^{\infty} \text{diam}(C_k) < \infty$.

We consider now a sequence $\{x_n\}$ of distinct points in  $G$ converging to zero, and a sequence of pairwise disjoint balls $\{B(x_n)\}$ centered at $x_n$.

Since the diameters of the sets $C_k$ go to zero, it is possible to find an increasing sequence $\{n_k\}$ of positive integers, such that
$C_{n_k} +l_k$  is included in the ball $B(x_k)$ for some translation $l_k \in G$.
Define  $C$ to be  the union $\bigcup_k (C_{n_k}+l_k)$ plus the element zero. So, $C$ is compact.

We will see that the set $C$ has the required properties.

For $k, l \in \N$,
denote by $B_{k,l}^0 = \left\{\sigma \in B_k: \sigma(k)=\dots =\sigma(k+l-1)=0\right\}$ and by $C_{k,l}^0=\{p(\sigma): \sigma \in B_{k,l}^0\}.$

Then $C_k$ is a  finite union of disjoint translates of $C_{k,l}^0$ since  
$$C_k= \bigcup_u \left( C_{k,l}^0 + u \right) $$ where 
$ u \in \left\{  \sum_{i=0}^{l-1}\alpha_i t_{k+i} :  \alpha_i \in \{0,1\}, 
\ \alpha_i = 0 \mbox{ if } k+i \in A_k \right\} $.

We want to see now that $C_{k+l}$ is the disjoint union of uncountable translates of $C_{k,l}^0$.
For this, define 
$$D_{k,l}=\{\tau \in \{0,1\}^N: \tau(s) = 0 \mbox{ if } s<k+l \mbox{ or } s \notin A_k \setminus A_{k+l}\}$$

The set $D_{k,l}$ is uncountable and so is the set $\{p(\tau): \tau \in D_{k,l}\}$ because the sequence $\{t_k\}$ is good.

It is easy to see now that the collection  of translates $\{ C_{k,l}^0 + p(\tau): \tau \in D_{k,l}\}$ is pairwise disjoint and that

$$C_{k+l}=\bigcup_{\tau \in D_{k,l}} \left( C_{k,l}^0 + p(\tau) \right) $$

Now we are ready to see that $C$ is strongly invisible.

Let $\mu$ a translation invariant Borel measure. If $\mu(C)$ is not zero, then there exists $k\in \N$ such that $\mu(C_{n_k}) >0$. Then $C_{n_k,n_{k+1}-n_k}^0$ has positive measure. 
It follows that $C_{n_{k+1}}$ is not $\sigma-$finite with respect to the measure $\mu$. So, neither  is $C$. 

\end{proof}

Using this construction, together with Lemma~\ref{density}, we can show the following proposition.
\begin{prop}
The set of strongly invisible sets is dense in $\C(\R)$.
\end{prop}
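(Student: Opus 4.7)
The plan is to apply Lemma~\ref{density} with $\mathcal{L}$ equal to the collection of compact strongly invisible subsets of $\R$, so the task reduces to verifying the two closure conditions of that lemma.

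For condition (1), I start from a compact strongly invisible $C \subseteq \R$, which exists by the preceding theorem applied to $G = \R$. The key observation is that strong invisibility is preserved under dilations: for $\lambda > 0$ and any translation-invariant Borel measure $\nu$, the set function $\mu(A) := \nu(\lambda A)$ is again a translation-invariant Borel measure on $\R$, and $\lambda C$ is $\nu$-$\sigma$-finite precisely when $C$ is $\mu$-$\sigma$-finite. Hence $\lambda C \in \mathcal{L}$, and taking $\lambda$ arbitrarily small produces strongly invisible sets of arbitrarily small diameter.

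For condition (2), let $A \in \mathcal{L}$ and set $U := \bigcup_{i=1}^{n}(A+t_i)$ for translates $A+t_i$ of $A$. Then $U$ is compact, so only strong invisibility has to be verified. Fix a translation-invariant Borel measure $\mu$. If $\mu(U)=0$ there is nothing to check; otherwise some $\mu(A+t_{i_0}) > 0$, and translation invariance yields $\mu(A) > 0$, so by strong invisibility of $A$ the set $A$ fails to be $\mu$-$\sigma$-finite. If $U$ were $\sigma$-finite, write $U = \bigcup_j E_j$ with $E_j$ Borel and $\mu(E_j)<\infty$; then the sets $(E_j \cap (A+t_{i_0})) - t_{i_0}$ are Borel, cover $A$, and still have finite $\mu$-measure by translation invariance, contradicting that $A$ is not $\sigma$-finite. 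Hence $U$ is strongly invisible.

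The main obstacle is conceptual rather than technical: one has to notice that the exact form of Lemma~\ref{density} (translation and finite union) matches what is preserved by the defining property of strong invisibility, since both translation and dilation carry translation-invariant Borel measures to translation-invariant Borel measures and carry countable Borel covers of finite measure to the same. Once this is in place, the two verifications are short.
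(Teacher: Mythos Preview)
Your proof is correct and follows the same strategy as the paper: apply Lemma~\ref{density} after verifying its two hypotheses for the class of compact strongly invisible sets. The only difference is in condition~(1)---the paper appeals directly to Davies' construction \cite{D} for strongly invisible sets of arbitrarily small diameter, whereas you obtain them by dilating a single strongly invisible set, which is a clean self-contained alternative.
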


\begin{proof}
Following \cite{D} we
have that there are compact strongly invisible sets of arbitrary small diameters. Moreover, the finite union of translates of a fixed strongly invisible set  is again strongly invisible. Using  Lemma~\ref{density}, we have the desired result.
   \end{proof}

\end{document}